\theoremstyle{theorem}
\newtheorem{mainthm}{Theorem}
\theoremstyle{theorem}
\newtheorem{thm}{Theorem}[section]
\newtheorem{lem}[thm]{Lemma}
\theoremstyle{definition}
\newtheorem{rem}[thm]{Remark}
\newtheorem{thmwot}[thm]{}
\numberwithin{equation}{thm}
\newcommand{\Ind}{\operatorname{Ind}}
\newcommand{\oo}{\operatorname{o}}
\newcommand{\op}{\operatorname{op}}
\newcommand{\rank}{\operatorname{rank}}
\newcommand{\Res}{\operatorname{Res}}
\newcommand{\cO}{\mathcal{O}}
\newcommand{\Set}[1]{\left\{\,#1\,\right\}}
\newcommand{\lsup}[1]{{^{#1}\hspace{-0.8pt}}}
\newcommand{\lsub}[1]{{_{#1}\hspace{-0.8pt}}}
\title{On stable equivalences of Morita type and nilpotent blocks
\footnote{supported by Natural Science Foundation of Sichuan Province (No. 2024NSFJQ0070) and by National Natural Science Foundation of China (No. 12271446).}}
\author{Conghui Li\footnote{Conghui Li, School of Mathematics, Southwest Jiaotong University, Chengdu 611756, China, Email: liconghui@swjtu.edu.cn}}
\date{}
\begin{document}

\setlength\abovedisplayskip{1ex plus 0.2ex minus 0.1ex}
\setlength\belowdisplayskip{1ex plus 0.2ex minus 0.1ex}

\raggedbottom
\maketitle

\begin{abstract}
In this note, we give a new proof by module-theoretic methods for a result of Puig asserting that blocks which are stable equivalent of Morita type to nilpotent blocks are also nilpotent.

\textbf{2020 Mathematics Subject Classification:} 20C20.

\textbf{Keyword:} stable equivalences of Morita type, nilpotent blocks, endopermutation sources
\end{abstract}

\section{Introduction}

Nilpotent blocks were introduced by M. Brou\'e and L. Puig \cite{BrouPuig80} to give a representation-theoretic analogue of Frobenius' result on $p$-nilpotent groups.
Puig gave a description of the source algebras of nilpotent blocks in \cite{Puig88}.
For an exposition of the theory of nilpotent blocks, see for example \cite[\S8.11]{Linck18b}.
Stable equivalences of Morita type was introduced by Brou\'e \cite{Brou94} after the similar results for derived equivalences by J. Rickard; for the definition and basic properties of stable equivalences of Morita type, see for example \cite[\S2.17, \S4.14]{Linck18a}.
In \cite{Puig99}, Puig proved that blocks which are stable equivalent of Morita type to nilpotent blocks are also nilpotent by showing that such stable equivalences are induced by bimodules with endopermutation sources.
For the definition and basic properties of endopermutation modules, see for example \cite[\S7.3]{Linck18b}.
This result answered affirmatively a question raised by Puig himself in \cite[1.8]{Puig88}.
The method in \cite{Puig99} heavily relies on the theory of $G$-algebras and pointed groups systematically developed by Puig; for an exposition of this theory, see for example \cite{Thev95}.
In this note, we give a new proof using module-theoretic methods for Puig's result.

Throughout this note, all groups are assumed to be finite; $\cO$ would always be a complete discrete valuation ring with quotient field $K$ of characteristic $0$ and residue field $k$ of prime characteristic $p$; blocks are always $p$-blocks; we assume $k$ is large enough for all blocks involved below.

\begin{mainthm}[{\cite[Theorem 8.2]{Puig99}}]\label{mainthm}
Let $G$ be a finite group and $b$ be a block of $\cO{G}$.
If $\cO{G}b$ is stable equivalent of Morita type to a nilpotent block, then this equivalence is induced by a bimodule with endopermutation sources, and thus $\cO{G}b$ is also nilpotent.
\end{mainthm}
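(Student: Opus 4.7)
My plan is to reduce to a canonical model for the nilpotent block, perform a local analysis via the Brauer construction and induction on the defect, and then glue the local information using Dade's theory of endopermutation modules.

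\textbf{Reduction to $\cO P$.} By Puig's source-algebra theorem for nilpotent blocks, $\cO H c$ is Morita equivalent to the group algebra $\cO P$ of a defect group $P$ of $c$ via a bimodule with endopermutation source. Composing this Morita equivalence with the given stable equivalence of Morita type reduces the problem to showing that, if $\cO G b$ is stable equivalent of Morita type to $\cO P$ via a bimodule $M$, then $M$ has endopermutation sources. Since tensor products of endopermutation-source bimodules preserve the property, the conclusion for the reduced bimodule pulls back to the original one.

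\textbf{Local induction and gluing.} View $M$ as an $\cO(G\times P)$-module in the block $b\otimes 1$. For each nontrivial $Q\le P$, the Brauer construction at the diagonal subgroup $\Delta Q$ yields a bimodule $M(\Delta Q)$ which, by the standard compatibility of Brauer quotients with stable equivalences of Morita type, induces a stable equivalence of Morita type between a local block of $b$ and the Brauer correspondent $kC_P(Q)$ of $\cO P$; the latter is again nilpotent and of strictly smaller defect. By induction on $|P|$, $M(\Delta Q)$ has endopermutation sources for every nontrivial $Q$, and the corresponding local blocks of $b$ are nilpotent. Now by Brou\'e's preservation-of-defect-groups theorem for stable equivalences of Morita type, $M$ has an indecomposable summand $V$ with vertex isomorphic to $\Delta P$, whose $\cO\Delta P$-source $W$ restricts via the proper-$\Delta Q$ Brauer constructions to the endopermutation sources provided by the induction hypothesis. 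These restrictions form a compatible family, and by Dade's classification of endopermutation $\cO$-modules for the $p$-group $\Delta P$, this compatibility forces $W$ itself to be endopermutation. Summands of $M$ with smaller vertex are handled analogously by applying the same argument inside the corresponding local bimodule. Once $M$ is known to have endopermutation sources, a direct computation of the source algebra of $b$ as a suitable equivariant endomorphism algebra of $M$ produces the twisted tensor-product form characterizing nilpotent blocks \cite{Puig88}, so $\cO G b$ is nilpotent.

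\textbf{Main obstacle.} The heart of the argument is the gluing step: assembling endopermutation sources coming from Brauer quotients into a single global endopermutation $\cO\Delta P$-module requires showing that the local sources are Dade-compatible under restriction, which in turn rests on a careful analysis of how Brauer constructions at nested subgroups intertwine. A secondary subtlety is managing, in the initial reduction, how the tensor-composition of the given stable equivalence with a Morita equivalence by an endopermutation-source bimodule interacts with vertices and sources; one must verify that this operation is transparent enough to transport the desired source property faithfully between the original and reduced situations.
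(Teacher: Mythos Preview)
Your reduction to $\cO P$ is sound and matches the paper's first move. The genuine gap is the gluing step. Dade's theory classifies endopermutation modules and detects \emph{equality} of classes in the Dade group via deflation--restrictions to sections; it does \emph{not} furnish a criterion for an arbitrary $\cO$-lattice to be endopermutation. Knowing that every Brauer quotient $M(\Delta Q)$ has an endopermutation source for nontrivial $Q$ gives no handle on the source $W$ at the top: already for $P$ cyclic of order $p$ there are no proper nontrivial $Q$, so your induction is vacuous, yet the statement is nontrivial there. More generally, there is no ``local-to-global'' principle saying that a lattice whose proper Brauer quotients are permutation (or endopermutation) is itself such. Two further slips compound this: the vertex $R$ of $M$ inside $Q\times P$ need only have surjective projections to $Q$ and $P$ (this is a nontrivial lemma), it is not $\Delta P$ in general; and the source of $M(\Delta Q)$ is not simply a restriction or Brauer quotient of $W$, so the ``compatible family'' you invoke is not the object Dade's detection theorems apply to.

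The paper closes exactly this gap by replacing the non-existent gluing principle with Weiss' criterion. After the reduction $\cO Hc=\cO Q$, a rank computation together with Bouc's tensor--induction formula shows that $V^*\otimes_{\cO[\iota_1(R)]}V$ is a direct summand of a permutation $\cO\Delta P$-module; since $\Res^R_{\iota_1(R)}V$ is projective (because $M$ is one-sided projective), Weiss' criterion applied to the normal subgroup $\iota_1(R)\trianglelefteq R$ forces $V^*\otimes_\cO V$ to be a permutation $\cO R$-module, i.e.\ $V$ is endopermutation. This single application of Weiss is the missing idea your inductive scheme cannot supply.
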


After giving some notation, preliminaries and quoted results in \S\ref{prelim}, we give our proof of Theorem \ref{mainthm} in \S\ref{proof}.
As in \cite{Puig99}, the proof here also uses Weiss' criterion for permutation modules, which was proved in \cite{Weiss88} for unramified $\cO$ and was generalized to arbitrary $\cO$ by K. Roggenkamp \cite{Rogg92}; see also \cite[Theorem A1.2]{Puig99}.
Besides Weiss' criterion, two key results used in the proof are Puig's result on local fusion \cite{Puig86} (\emph{i.e.} the fusion systems can be determined by source algebras) for which we follow the statements and treatments in \cite[\S8.7]{Linck18b} using module-theoretic methods and Puig's result on stable equivalences of Morita type with endopermutation source \cite[7.6]{Puig99} which is stated and proved using module-theoretic methods in \cite[Theorem 9.11.2]{Linck18b}.
Other ingredients of the proof include a Mackey-like formula of Bouc \cite{Bouc10}.


\section{Notation, preliminaries and quoted results}\label{prelim}

\begin{thmwot}\label{notation}
In this note, any ring has an identity; all (left, right and bi-) modules are assumed to be finitely generated; left modules are abbreviated as modules.
The notation for representations of finite groups are standard; see for example \cite{Linck18a,Linck18b}.
Here we just fix some conventions and notation.
Let $R$ be an arbitrary commutative ring, $A,B$ be $R$-algebras and $G,H$ be groups.

For two $A$-modules $U,V$, the notation $U \mid V$ means $U$ is isomorphic to a direct summand of $V$.

We use the convention to view $(A,B)$-bimodules as $A\otimes_RB^{\op}$-modules; in particular, right (left) $B$-modules are viewed as left (right) $B^{\op}$-modules.
$(RG,RH)$-bimodules can be viewed as $R(G\times H)$-module via the group isomorphism $H \to H^{\op},\ h \mapsto h^{-1}$.
If furthermore $G=H$, an $R(G \times G)$-module also has an $RG$-module structure via the homomorphism $G \to G \times G,\ g \mapsto (g,g)$.
For example, for an $RG$-module $V$, the dual module $V^*$ has a right $RG$-module structure and a left $RG$-module structure.
Usually, one can determine which module structure is used by context.

Let $X$ be a subgroup of $G \times H$ and $V$ be an $RX$-module.
For any $(g,h)\in X$ and $v\in V$, we write the action of $(g,h)$ on $v$ as $(g,h).v=g.v.h^{-1}$.
The dual module $\left(\Ind_X^{G\times H}V\right)^*$ has both a left $R(G\times H)$-module structure and a right $R(G\times H)$-module structure.
The right $R(G \times H)$-module structure on $\left(\Ind_X^{G\times H}V\right)^*$ induces an $(RH,RG)$-bimodule structure, that induces again an $R(H \times G)$-module structure, for which there is an isomorphism
\begin{equation}\label{equ-IndXGHV*}
\left(\Ind_X^{G\times H}V\right)^* \cong \Ind_{X^\sharp}^{H\times G}V^*,
\end{equation}
where $X^\sharp = \Set{ (h,g) \mid (g,h)\in X }$ and $(h,g)\in X^\sharp$ acts on $f\in V^*$ as
\[ ((h,g).f)(v) = f(g^{-1}.v.h),\ \forall\, v\in V. \]

Finally, we fix some notation for direct products of groups and their subgroups.
Let $X$ be a subgroup of $G \times H$.
We denote by $\pi_1$ ($\pi_2$, resp.) the projection from $G \times H$ to the first component $G$ (the second component $H$, resp.), and we also denote by $\pi_1,\pi_2$ the restrictions of these two projections to $X$ by abuse of notation.
In the sequel, we identify $G$ and $H$ with $G \times 1$ and $1\times H$ respectively in $G\times H$ when no confusion will be produced.
The notation $\iota_1(X)$, when viewed as a subgroup of $G$, means the subgroup of $G$ such that $X \cap (G \times 1) = \iota_1(X) \times 1$, while when viewed as a subgroup of $G \times H$, $\iota_1(X)$ means $X \cap (G \times 1)$; similarly for $\iota_2(X)$.
We will leave the context to determine the exact meaning of such notation.
\end{thmwot}

We will use a Mackey-like formula for tensor products of bimodules by S. Bouc which we quoted as follows.

\begin{thm}[Bouc \cite{Bouc10}]\label{Bouc}
Let $G,H,K$ be groups and $X$ ($Y$, resp.) be a subgroup of $G \times H$ ($H \times K$, resp.).
Assume $U$ is an $RX$-module and $V$ is an $RY$-module.
Then there is an isomorphism of $(RG,RK)$-bimodules
\[ \left( \Ind_X^{G\times H} U \right) \otimes_{RH} \left( \Ind_Y^{H\times K} V \right) \cong \bigoplus_{t\in [\pi_2(X)\backslash{H}/\pi_1(Y)]} \Ind_{X\ast\lsup{(t,1)}Y}^{G\times K} U \otimes_{R[\iota_2(X)\cap\lsup{t}(\iota_1(Y))]} \lsup{(t,1)}V, \]
where
\[ X\ast\lsup{(t,1)}Y = \Set{ (g,k)\in G\times K \mid \exists\, h\in H,\ (g,h)\in X,\, (h^t,k)\in Y } \]
and $(g,k)\in X\ast\lsup{(t,1)}Y$ acts on $u\otimes v \in U \otimes_{R[\iota_2(X)\cap\lsup{t}(\iota_1(Y))]} \lsup{(t,1)}V$ as
\[ (g,k).(u\otimes v) = g.u.h^{-1} \otimes h^t.v.k^{-1} \]
if $h$ is chosen such that $(g,h)\in X$ and $(h^t,k)\in Y$. 
\end{thm}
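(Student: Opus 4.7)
The formula is a Mackey-type decomposition for the tensor product $\otimes_{RH}$ of two bimodules, each induced from a subgroup of a direct product. The plan is to mimic the classical Mackey formula: decompose $H$ into $(\pi_2(X),\pi_1(Y))$-double cosets and identify the resulting summands as the induced bimodules appearing on the right hand side.

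I would first rewrite
\[ \Ind_X^{G\times H} U \otimes_{RH} \Ind_Y^{H\times K}V \cong R(G\times H)\otimes_{RX}U\otimes_{RH}R(H\times K)\otimes_{RY}V \]
by associativity of tensor products. The middle piece $R(G\times H)\otimes_{RH}R(H\times K)$ collapses $R$-linearly via the middle relation $(g,hh_0)\otimes(h',k)=(g,h)\otimes(h_0h',k)$: every element can be brought to the form $(g,1)\otimes u\otimes(h,k)\otimes v$. Decomposing $H=\bigsqcup_{t\in[\pi_2(X)\backslash H/\pi_1(Y)]}\pi_2(X)\,t\,\pi_1(Y)$ then yields a direct sum decomposition of the whole tensor product indexed by $t$, where the $t$-summand is spanned by representatives with $h\in\pi_2(X)\,t\,\pi_1(Y)$.

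For each $t$, I would use the $RX$- and $RY$-actions to normalize the representatives so that $h=t$. A short computation shows that an element $(g_0,h_0)\in X$ acts on $(g,1)\otimes u\otimes(h,k)\otimes v$ by sending it to $(gg_0,1)\otimes u\otimes(h_0 h,k)\otimes v$ after re-collapsing the middle, so that the action shifts $h$ by $\pi_2(X)$ on the left; similarly $Y$ shifts $h$ by $\pi_1(Y)$ on the right. After normalizing $h=t$, spanning elements of the $t$-summand have the form $(g,1)\otimes u\otimes(t,k)\otimes v$, and one checks: (i) the set of $(g,k)\in G\times K$ admitting some $h\in H$ with $(g,h)\in X$ and $(h^t,k)\in Y$ is precisely the subgroup $X\ast\lsup{(t,1)}Y$; (ii) two valid choices of $h$ differ by an element of $\iota_2(X)\cap\lsup{t}(\iota_1(Y))$, which is exactly the subring over which the balanced tensor product $U\otimes\lsup{(t,1)}V$ is formed; (iii) the natural $(RG,RK)$-bimodule action induced from the whole tensor product matches the formula $(g,k).(u\otimes v)=g.u.h^{-1}\otimes h^t.v.k^{-1}$ from the theorem statement.

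The main obstacle is the last verification: the conjugation by $(t,1)$ on $Y$ and on $V$ must be tracked carefully, since it interacts with the convention that turns right module structures into left ones via $h\mapsto h^{-1}$, as well as with the chosen lift $h\in H$ witnessing membership in the star product. Once these identifications are assembled for each double coset representative $t$, summing over $t$ yields the claimed decomposition.
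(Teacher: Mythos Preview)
The paper does not prove this theorem: it is quoted verbatim from Bouc \cite{Bouc10} as a known result, so there is no ``paper's own proof'' to compare against. Your sketch is the standard Mackey-type argument and is essentially the approach Bouc himself uses, so it is correct in outline; just be aware that in the paper this statement functions as an imported tool, not something to be re-proved.
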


\begin{thmwot}
By the completeness assumption of $\cO$, the blocks of $\cO{G}$ and $kG$ correspond bijectively, called the blocks of $G$.
We follow the routine to identify a block of $\cO{G}$ with its primitive central idempotent $b$ and call $\cO{G}b$ the associated block algebra.
R. Brauer defined an important invariant, namely defect groups, for blocks of $G$; see for example \cite[\S6.1, \S6.2]{Linck18b}.
J.L. Alperin and M. Brou\'e introduced Brauer pairs as local data for blocks of finite groups; see for example \cite[\S6.3]{Linck18b}.
L. Puig developed the theory of fusion systems which can be used to organize these local data into a small category.
For fusion systems in general, see for example \cite[Part I]{AschKesOliv11} or \cite[\S\S8.1--8.3]{Linck18b} (note that fusion systems in \cite{Linck18b} mean saturated fusion systems as in \cite{AschKesOliv11}); for fusion systems associated to blocks, see \cite[\S8.5]{Linck18b} or \cite[Part IV]{AschKesOliv11}.

Puig also introduced for each block the concept of source algebras, which are Morita equivalent to the associated block algebra.
For definition and basic properties of source algebras, see for example \cite[\S6.4]{Linck18b}.
Source algebras are better compatible with the local data than block algebras.
For example, by results of Puig \cite{Puig86}, the fusion systems can be determined by source algebras; we will follow the statements in \cite[\S8.7]{Linck18b}.
\end{thmwot}

We will use the following lemma in our later proof.

\begin{lem}\label{lem-rank-iM}
Let $G$ be a group, $b$ be a block of $\cO{G}$ with defect group $P$ and a source idempotent $i\in (\cO{G}b)^P$.
Assume $M$ is an $\cO$-free indecomposable $\cO{G}b$-module with a vertex $Q\leq P$ and an $\cO{Q}$-source $V$.
Then $|P:Q|$ divides $\rank_\cO iM$ and $\rank_\cO iM \geq |P:Q|\rank_\cO V$.
Furthermore, there is an indecomposable direct summand of $iM$ as $\cO{P}$-module whose $\cO$-rank is equal to $|P:Q|\rank_\cO V$ and this indecomposable direct summand is non-projective as an $\cO{P}$-module whenever $M$ is non-projective as an $\cO{G}$-module.
\end{lem}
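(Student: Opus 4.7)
The plan is to decompose $iM$ as an $\cO P$-module via the Mackey formula and Green's indecomposability theorem, and then to identify a distinguished indecomposable $\cO P$-summand. Since $M$ has vertex $Q\leq P$ and source $V$, we have $M\mid\Ind_Q^G V$ as $\cO G$-modules. Because $i\in(\cO Gb)^P$, left multiplication by $i$ is $\cO P$-linear, so $iM$ is an $\cO P$-direct summand of $\Res^G_P M$ and hence of $\Res^G_P\Ind_Q^G V$. The Mackey formula (a special case of Theorem \ref{Bouc}) gives
\[ \Res^G_P\Ind_Q^G V \cong \bigoplus_{g\in[P\backslash G/Q]} \Ind_{P\cap\lsup{g}Q}^P\Res^{\lsup{g}Q}_{P\cap\lsup{g}Q}(\lsup{g}V). \]
Since $k$ is large enough, the source $V$ is absolutely indecomposable; by Green's indecomposability theorem, every indecomposable $\cO P$-summand of the right-hand side has the form $\Ind_{P\cap\lsup{g}Q}^P W$ for some $g$ and some indecomposable $W\mid\Res^{\lsup{g}Q}_{P\cap\lsup{g}Q}(\lsup{g}V)$, of $\cO$-rank $|P:P\cap\lsup{g}Q|\cdot\rank_\cO W$.

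Because $P\cap\lsup{g}Q\leq\lsup{g}Q$ are $p$-groups with $|P\cap\lsup{g}Q|$ dividing $|Q|$, $|P:Q|$ divides $|P:P\cap\lsup{g}Q|$ and hence divides the $\cO$-rank of every indecomposable $\cO P$-summand of $iM$; in particular $|P:Q|\mid\rank_\cO iM$. Moreover, the $g=1$ contribution to the Mackey decomposition is $\Ind_Q^P V$, which by Green's theorem is (absolutely) indecomposable of $\cO$-rank $|P:Q|\rank_\cO V$. It remains to show that (a $G$-conjugate of) $\Ind_Q^P V$ is actually an $\cO P$-direct summand of $iM$, not merely of the ambient module $\Res^G_P\Ind_Q^G V$.

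For this, note that $V\mid\Res^G_Q M$, and since $i,b-i\in(\cO Gb)^P\subseteq(\cO Gb)^Q$ the decomposition $M=iM\oplus(b-i)M$ restricts to $\Res^G_Q M=\Res^P_Q(iM)\oplus\Res^P_Q((b-i)M)$, so Krull--Schmidt places $V$ as an indecomposable summand of at least one side. The defining property of a source idempotent---that it is compatible with the local vertex-source data at every subgroup of $P$, as developed module-theoretically in \cite[\S8.7]{Linck18b}---guarantees that $V'\mid\Res^P_{Q'}(iM)$ for some $G$-conjugate pair $(Q',V')$ with $Q'\leq P$. Applying the restriction-Mackey analysis above to each indecomposable $\cO P$-summand of $iM$ and using that $V'$ has vertex $Q'$ as an $\cO Q'$-module, one deduces that $iM$ admits an indecomposable summand of the form $\Ind_{Q'}^P V'$, of $\cO$-rank $|P:Q|\rank_\cO V$. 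Non-projectivity is immediate: if $M$ is non-projective then $Q\neq 1$, so $\Ind_{Q'}^P V'$ has non-trivial vertex $Q'$ and is therefore non-projective as $\cO P$-module. The main obstacle is the invocation of the source-idempotent property in this last step, which goes beyond the formal Mackey--Green machinery used for the divisibility estimates and is where the module-theoretic treatment of local fusion in \cite[\S8.7]{Linck18b} is indispensable.
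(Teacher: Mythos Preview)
Your argument is essentially correct and follows the same overall strategy as the paper: decompose $iM$ as an $\cO P$-module into pieces of the form $\Ind_S^P(\text{something from }V)$, observe that $|P:Q|$ divides every such rank, and then use the fact that (a conjugate of) $V$ actually occurs in $\Res_Q iM$ together with a vertex argument to pin down a summand of rank exactly $|P:Q|\rank_\cO V$. The implementation differs: the paper works inside the source algebra, using $iM\mid i\cO Gi\otimes_{\cO Q}V$ and the $(\cO P,\cO Q)$-bimodule structure of $i\cO Gi$ from \cite[Theorem 8.7.1]{Linck18b}, whereas you work at the group level with $iM\mid\Res^G_P\Ind_Q^G V$ and classical Mackey. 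Your route is slightly more elementary (it does not invoke the full structure theorem for $i\cO Gi$), while the paper's route yields the sharper information that the subgroups $S$ arising are sources of morphisms into $Q$ in the fusion system.

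The one place you should tighten is your appeal to ``the defining property of a source idempotent \ldots\ as developed in \cite[\S8.7]{Linck18b}'' to get $V'\mid\Res^P_{Q'} iM$. That is not a definition, and it is not in \S8.7: the precise statement you need is \cite[6.4.10]{Linck18b} (or \cite[6.3]{Linck94}), exactly what the paper cites. You should also spell out the final Mackey step: from $V'\mid\Res^P_{Q'}U$ with $U\cong\Ind^P_{P\cap{}^gQ}W$ and the vertex condition on $V'$, one forces $Q'={}^x(P\cap{}^gQ)$ for some $x\in P$ and $\rank_\cO W=\rank_\cO V$, so that $U\cong\Ind^P_{Q'}V'$. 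This is implicit in what you wrote but deserves a sentence, since it is where the rank equality (and not merely the inequality) comes from.
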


\begin{proof}
By \cite[6.3]{Linck94} or \cite[6.4.10]{Linck18b}, we may assume $V \mid \Res^{i\cO{G}i}_{\cO{Q}}iM$ and $iM \mid i\cO{G}i \otimes_{\cO{Q}} V$.
It follows from the $(\cO{P},\cO{Q})$-bimodule structure of $i\cO{G}i$ (see \cite[Theorem 8.7.1]{Linck18b}) and Green's indecomposable theorem that $\Res^{i\cO{G}i}_{\cO{P}}iM$ is isomorphic to a direct sum of indecomposable modules of the form $\cO{P}\otimes_{\cO{S}}\lsub{\varphi}W$, where $S$ is a subgroup of $P$, $\varphi\colon S \to Q$ belongs to the fusion system of $\cO{G}b$ determined by $i$ (see \cite[p.198]{Linck18b}) and $W$ is an indecomposable direct summand of $\Res^Q_{\varphi(S)}V$.
So $|P:Q|$ divides $\rank_\cO iM$. 
Now it follows from $V \mid \Res^{i\cO{G}i}_{\cO{Q}}iM$ that there are some $S_0,\varphi_0,W_0$ as above such that $\cO{P}\otimes_{\cO{S_0}}\lsub{\varphi_0}W_0 \mid iM$ as $\cO{P}$-modules and $V \mid \Res^P_Q\cO{P}\otimes_{\cO{S_0}}\lsub{\varphi_0}W_0$.
Since $V$ has $Q$ as a vertex, $\varphi_0\colon S_0 \to Q$ must be an isomorphism and $W_0=V$.
Thus we have $\rank_\cO iM \geq \rank_\cO \cO{P}\otimes_{\cO{S_0}}\lsub{\varphi_0}V = |P:Q|\rank_\cO V$, and the equality shows also the last assertion.
\end{proof}

\begin{rem}
Keep the notation in Lemma \ref{lem-rank-iM}.
Similar results for $iM$ in Lemma \ref{lem-rank-iM} also holds for $M$.
\end{rem}

Finally, we record Weiss' criterion for later use.

\begin{thm}[Weiss, \cite{Weiss88}, \cite{Rogg92}, {\cite[Theorem A1.2]{Puig99}}]\label{Weiss}
Let $P$ be a finite $p$-group, $M$ be an $\cO$-free $\cO{P}$-module.
If there is a normal subgroup $Q$ of $P$ such that $\Res^P_QM$ is projective and $M^Q$ is a permutation $\cO(P/Q)$-module, then $M$ is a permutation $\cO{P}$-module.
\end{thm}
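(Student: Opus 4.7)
The plan is to proceed by induction on $|Q|$. The base case $Q = 1$ is immediate: the hypothesis forces $M$ itself to be $\cO{P}$-projective, hence free since $P$ is a $p$-group over the local ring $\cO$, and therefore a permutation $\cO{P}$-module.

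For the inductive step with $Q \neq 1$, normality of $Q$ in the $p$-group $P$ guarantees that $Q \cap Z(P) \neq 1$, so one can pick a subgroup $T \leq Q \cap Z(P)$ of order $p$. The strategy is to apply the inductive hypothesis to the triple $(P/T,\, Q/T,\, M^T)$. I would verify three points. First, $M^T$ is $\cO$-free as an $\cO$-submodule of the $\cO$-free module $M$. Second, $\Res^{P/T}_{Q/T} M^T$ is $\cO(Q/T)$-projective: since $\Res^P_Q M$ is $\cO{Q}$-free and $T$ is central in $Q$, the norm map computation gives $(\cO{Q})^T \cong \cO(Q/T)$ as $\cO(Q/T)$-modules, so $M^T$ is $\cO(Q/T)$-free. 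Third, $(M^T)^{Q/T}$ coincides canonically with $M^Q$, which is a permutation $\cO((P/T)/(Q/T))$-module under the natural isomorphism $(P/T)/(Q/T) \cong P/Q$. The induction then yields that $M^T$ is a permutation $\cO(P/T)$-module, reducing the problem to the situation where $Q = T$ is a central subgroup of $P$ of order $p$.

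The substantive case is this reduced situation: $Q \leq Z(P)$ of order $p$, $\Res^P_Q M$ is $\cO{Q}$-free, and $M^Q$ is a permutation $\cO(P/Q)$-module; one must then lift a permutation $\cO(P/Q)$-basis of $M^Q$ to a permutation $\cO{P}$-basis of $M$. The main obstacle lies precisely in this lifting: controlling the $P$-action on preimages of the given basis of $M^Q$ under the relative norm map $M \to M^Q$ requires a delicate cohomological analysis of $\cO{Q}$-lattices endowed with a compatible $P/Q$-action. This is the heart of Weiss' original argument, later extended to arbitrary $\cO$ by Roggenkamp; in the context of this note it is invoked as a black box via the cited references, and only the reduction described above would be carried out explicitly.
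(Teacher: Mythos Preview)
The paper does not prove this statement at all: Theorem~\ref{Weiss} is merely \emph{recorded} with attributions to Weiss, Roggenkamp, and Puig, and is used as a black box in the proof of the main theorem. There is therefore no proof in the paper to compare your proposal against.

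Your reduction by induction on $|Q|$ to the case where $Q$ is central of order $p$ is correct and standard (the three verifications you list are fine; in particular, for $T$ central in $Q$ and $\Res^P_Q M$ free, the computation $(\cO Q)^T \cong \cO(Q/T)$ as $\cO(Q/T)$-modules is valid, and projectivity of $\Res^P_T M$ follows from that of $\Res^P_Q M$). However, as you yourself note, the entire substance of Weiss' theorem lies in the residual case $|Q|=p$, which you also defer to the cited references. So in effect your proposal and the paper do the same thing---treat the result as a black box---except that you add a short, correct, but inessential reduction step on top.
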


In the sequel, we will also use \cite[8.7.1, 9.11.2]{Linck18b}, but since they are well-presented in book form, we do not bother to include their statements here.

\section{Proof of the main result}\label{proof}

\begin{thmwot}\label{setting-SEMT}
We begin with some basic settings and results for stable equivalences of Morita type between blocks of finite groups.
Let $G$\,($H$, resp.) be a group and $b$\,($c$, resp.) be a block of $G$\,($H$, resp.) with a defect group $P$\,($Q$, resp.).
Assume $M$ is an indecomposable $(\cO{H}c,\cO{G}b)$-bimodule projective as left and right module inducing a stable equivalence of Morita type between $\cO{G}b$ and $\cO{H}c$.
We may assume that both $b$ and $c$ are not of defect zero.
As an $\cO(H\times G)$-module, $M$ belongs to the block $c\otimes b^{\oo}$ of $H \times G$ with $Q \times P$ as a defect group, where $b^{\oo}$ is the block of $\cO{G}$ corresponding to the block $b$ of $\cO{G}^{\op}$ induced by the group isomorphism $G \to G^{\op},\ g\mapsto g^{-1}$; see for example \cite[Proposition 8.7.7]{Linck18b}.
Thus we may choose a vertex $R$ of $M$ such that $ R \leq Q \times P$.

Fix an $\cO{R}$-source $V$ of $M$.
Then we have
\[ M \mid \cO{H}c \otimes_{\cO{Q}} \Ind_R^{Q \times P} V \otimes_{\cO{P}} \cO{G}b. \]
Furthermore, there are primitive idempotents $i\in(\cO{G}b)^P$ and $j\in(\cO{H}c)^Q$ such that
\begin{equation}
M \mid \cO{H}j \otimes_{\cO{Q}} \Ind_R^{Q \times P} V \otimes_{\cO{P}} i\cO{G}.
\end{equation}
We claim that $i$\,($j$, resp.) is a source idempotent of $\cO{G}b$\,($\cO{H}c$, resp.).
Assume $i$ belongs to a non-local point of $P$ on $\cO{G}b$.
Thus there is a proper subgroup $P_0$ of $P$ such that $i\in (\cO{G}b)_{P_0}^P$.
Then by \cite[Theorem 5.12.8]{Linck18a}, there is $i_0\in(\cO{G}b)^{P_0}$ such that $i\cO{G} \cong \cO{P} \otimes_{\cO{P_0}} i_0\cO{G}$.
So we have $M \mid \cO{H}j \otimes_{\cO{Q}} \Ind_R^{Q \times P} V \otimes_{\cO{P_0}} i_0\cO{G}$, and it follows from $\cO{G}b \mid M^* \otimes_{\cO{H}c} M$ that $\cO{G}b \mid \cO{G} \otimes_{\cO{P_0}} W \otimes_{\cO{P_0}} \cO{G}$ for some indecomposable $\cO(P_0 \times P_0)$-module $W$.
Consequently, as an $\cO(G\times G)$-module, $\cO{G}b$ has a vertex contained in $P_0\times P_0$, which contradicts to the fact that $\Delta{P}$ is a vertex of $\cO{G}b$.
Thus $i$ is a source idempotent of $\cO{G}b$; similarly for $j$.

We will need a slight generalization of part of arguments in \cite[Theorem 9.11.2]{Linck18b} as follows.

Since $\cO{G}i$ is an indecomposable non-projective $(\cO{G}b,\cO{P})$-bimodule and $M$ induces a stable equivalence, the $(\cO{H}c,\cO{P})$-bimodule $Mi \cong M \otimes_{\cO{G}b} \cO{G}i$ can be decomposed as
\begin{equation}\label{equ-Mi-N0-N1}
Mi = N_0 \oplus N_1,
\end{equation}
where $N_0$ is an indecomposable non-projective $(\cO{H}c,\cO{P})$-bimodule and $N_1$ is a projective $(\cO{H}c,\cO{P})$-bimodule.

It follows from (\ref{equ-Mi-N0-N1}) that
\begin{equation}
M^* \otimes_{\cO{H}c} Mi \cong (M^* \otimes_{\cO{H}c} N_0) \oplus (M^* \otimes_{\cO{H}c} N_1)
\end{equation}
as $(\cO{G}b,\cO{P})$-bimodules.
Noting that $M$ is projective as left and right module, $M^* \otimes_{\cO{H}c} N_1$ is projective as an $(\cO{G}b,\cO{P})$-bimodule.
Since $\cO{G}i \mid M^* \otimes_{\cO{H}c} Mi$ and $\cO{G}i$ is indecomposable non-projective as an $(\cO{G}b,\cO{P})$-bimodule, we have that
\begin{equation}\label{equ-OGi-mid-M*-N0}
\cO{G}i \mid M^* \otimes_{\cO{H}c} N_0.
\end{equation}

Since $N_0 \mid \cO{H}j \otimes_{\cO{Q}} \Ind_R^{Q \times P} V \otimes_{\cO{P}} i\cO{G}i$ as $(\cO{H}c,\cO{P})$-bimodules, there is an indecomposable direct summand $W_1$ of $i\cO{G}i$ as $\cO(P\times P)$-modules such that
\[ N_0 \mid \cO{H}j \otimes_{\cO{Q}} \Ind_R^{Q \times P} V \otimes_{\cO{P}} W_1. \]
Then it follows from (\ref{equ-OGi-mid-M*-N0}) that $\cO{G}i \mid \cO{G} \otimes_{\cO{P}} W_2 \otimes_{\cO{P}} W_1$ for some $(\cO{P},\cO{P})$-bimodule $W_2$.
Since the $\cO(G\times P)$-module $\cO{G}i$ has $\Delta{P}$ as a vertex, it follows from \cite[8.7.1]{Linck18b} that $W_1 \cong \cO[yP]$ for some $y \in N_G(P,e_P)$ with $(P,e_P)$ the Brauer pair determined by $i$.
Noting that $Mi \cong Miy^{-1}$, we have $N_0 \cong N_0y^{-1}$ and 
\begin{equation}
N_0 \mid \cO{H}j \otimes_{\cO{Q}} \Ind_R^{Q \times P} V
\end{equation}
and
\begin{equation}\label{equ-N*0-N0}
N^*_0 \otimes_{\cO{H}c} N_0 \mid \Ind_{R^\sharp}^{P\times Q} V^* \otimes_{\cO{Q}} j\cO{H}j \otimes_{\cO{Q}} \Ind_R^{Q\times P} V.
\end{equation}
Here we use the isomorphism (\ref{equ-IndXGHV*}).

Note that we have
\begin{equation}
i\cO{G}i \oplus X \cong iM^* \otimes_{\cO{H}c} Mi
\end{equation}
for some projective $(i\cO{G}i,i\cO{G}i)$-bimodule $X$.
Combined with (\ref{equ-Mi-N0-N1}), we have as $(\cO{P},\cO{P})$-bimodules that
\begin{equation}
i\cO{G}i \oplus X \cong iM^* \otimes_{\cO{H}c} Mi \cong (N^*_0 \otimes_{\cO{H}c} N_0) \oplus Y
\end{equation}
where $X$ and $Y$ are both projective as $\cO(P\times P)$-modules (for the projectivity of $Y$, we need to use the fact that $M$ is projective as left and right module and thus so are $N_0$ and $N_1$).
Thus we also have:
\begin{compactenum}[(\ref{setting-SEMT}.1)]\setcounter{enumi}{7}
\item\label{N^*0-N0-P-P-stable-basis}
$iM^* \otimes_{\cO{H}c} Mi$ and $N^*_0 \otimes_{\cO{H}c} N_0$ have $P \times P$-stable basis;
\item\label{item-OP-N*0-N0}
The non-projective indecomposable direct summands of $i\cO{G}i$ and of $N^*_0 \otimes_{\cO{H}c} N_0$ as $\cO(P\times P)$-modules are the same up to isomorphism; in particular, $\cO{P} \mid N^*_0 \otimes_{\cO{H}c} N_0$.
\end{compactenum}

\end{thmwot}

\begin{lem}[{\cite[Theorem 6.9]{Puig99}}]\label{lem-surj-proj}
Keep notation in \ref{setting-SEMT}, then the both projections $\pi_1\colon R \to Q$ and $\pi_2\colon R \to P$ are surjective.
\end{lem}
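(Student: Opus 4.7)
The plan is to prove $\pi_2\colon R\to P$ is surjective; the surjectivity of $\pi_1\colon R\to Q$ will then follow by applying the same result to the dual bimodule $M^*$, which is an indecomposable $(\cO{G}b,\cO{H}c)$-bimodule projective on both sides inducing a stable equivalence in the reverse direction, with vertex $R^\sharp\leq P\times Q$ satisfying $\pi_2(R^\sharp)=\pi_1(R)$.

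Assume for contradiction that $P':=\pi_2(R)\lneq P$. The strategy is to show $N_0^*\otimes_{\cO{H}c}N_0$ is $(P\times P)$-projective relative to $P'\times P'$, while on the other hand item (\ref{item-OP-N*0-N0}) asserts that $\cO{P}\mid N_0^*\otimes_{\cO{H}c}N_0$ as an $\cO(P\times P)$-module and $\cO{P}$ has vertex $\Delta P$; this is the contradiction, since any $(P\times P)$-conjugate of $P'\times P'$ has both coordinate projections strictly contained in $P$, whereas $\Delta P$ projects surjectively onto both factors.

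To obtain the relative projectivity, I would first decompose $j\cO{H}j$ as an $\cO(Q\times Q)$-module using \cite[Theorem 8.7.1]{Linck18b} into a direct sum of modules of the form $\Ind_{S_y}^{Q\times Q}\cO$, where each $S_y\leq Q\times Q$ is a ``twisted diagonal'' attached to a double coset representative $y$ in the relevant Brauer data. Substituting this decomposition into the right-hand side of (\ref{equ-N*0-N0}) and applying the Mackey-like formula of Bouc (Theorem \ref{Bouc}) twice, once to each $\otimes_{\cO{Q}}$, exhibits $N_0^*\otimes_{\cO{H}c}N_0$ as an $\cO(P\times P)$-summand of a direct sum of modules of the form $\Ind_T^{P\times P}(-)$ with
\[ T \;=\; \bigl(R^\sharp\ast\lsup{(s,1)}S_y\bigr)\ast\lsup{(t,1)}R \]
for suitable $y,s,t$. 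The key observation, read off directly from the definition of the $\ast$-product in Theorem \ref{Bouc}, is that $\pi_1(X\ast\lsup{(u,1)}Y)\leq\pi_1(X)$ and $\pi_2(X\ast\lsup{(u,1)}Y)\leq\pi_2(Y)$ for every such triple $(X,u,Y)$; conjugation by $(u,1)$ does not affect the second projection. Iterating yields $\pi_1(T)\leq\pi_1(R^\sharp)=\pi_2(R)=P'$ and $\pi_2(T)\leq\pi_2(R)=P'$, so $T\leq P'\times P'$, giving the required relative projectivity.

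The combination with the summand $\cO{P}$ of vertex $\Delta P$ then forces $P'=P$ as described above, and the parallel argument applied to $M^*$ completes the proof. The main obstacle is the bookkeeping in composing Bouc's formula with the decomposition of $j\cO{H}j$; the projection inequalities for the $\ast$-product are immediate from the definition, so once the two applications of Theorem \ref{Bouc} are set up correctly, the final vertex comparison using $\Delta P$ is straightforward.
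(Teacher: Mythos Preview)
Your proposal is correct and follows essentially the same route as the paper's proof: both start from (\ref{equ-N*0-N0}) together with (\ref{setting-SEMT}.\ref{item-OP-N*0-N0}), decompose $j\cO{H}j$ via \cite[Theorem 8.7.1]{Linck18b}, apply Bouc's formula (the paper uses one Mackey step plus one Bouc step, you use Bouc twice), and then track the coordinate projections through the $\ast$-products to conclude from the vertex $\Delta P$ of $\cO P$ that $\pi_2(R)=P$. Your packaging as a relative-projectivity contradiction is a cosmetic reorganization of the paper's direct tracking argument, and your appeal to $M^*$ for the other projection is exactly the paper's ``symmetric arguments''.
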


\begin{proof}
It follows from (\ref{equ-N*0-N0}) and (\ref{setting-SEMT}.\ref{item-OP-N*0-N0}) that as $\cO(P\times P)$-modules
\[ \cO{P} \mid \Ind_{R^\sharp}^{P\times Q} V^* \otimes_{\cO{Q}} W \otimes_{\cO{Q}} \Ind_R^{Q\times P} V, \]
where $W$ is an indecomposable direct summand of $j\cO{H}j$ as an $\cO(Q\times Q)$-modules.
By \cite[Theorem 8.7.1]{Linck18b}, we have that
\[ W \cong \cO{Q} \otimes_{\cO{S}} {_{\varphi}\cO{Q}} \]
for some $S \leq Q$ and some $\varphi\colon S \to Q$ in the fusion system of $\cO{H}c$ determined by $j$.
Thus we have
\[ \cO{P} \mid \Ind_{R^\sharp}^{P\times Q} V^* \otimes_{\cO{S}} \lsub{\varphi}\Ind_R^{Q\times P} V. \]
Using Mackey formula for $\Res^{P\times Q}_{P\times S}\Ind_{R^\sharp}^{P\times Q} V^*$ and $\Res^{Q\times P}_{\varphi(S)\times P}\Ind_R^{Q\times P} V$, we have that
\[ \cO{P} \mid \Ind_{R_1}^{P\times S} V_1 \otimes_{\cO{S}} \lsub{\varphi}\Ind_{R_2}^{\varphi(S)\times P} V_2, \]
where $R_1 \leq P \times S$,  $R_2 \leq \varphi(S) \times P$, $V_1$ is an $\cO{R}_1$-module and $V_2$ is an $\cO{R}_2$-module.
Here, it is readily to see that the elements appearing as the first components of elements of $R_1$ also appear as the first components of some elements of $R^\sharp$; similarly for $R_2$.
Since
\[ \lsub{\varphi}\Ind_{R_2}^{\varphi(S)\times P} V_2 \cong \Ind_{R_3}^{S\times P} V_3 \]
for some $R_3 \leq S \times P$ and some $\cO R_3$-module $V_3$, we have that
\[ \cO{P} \mid \Ind_{R_1}^{P\times S} V_1 \otimes_{\cO{S}} \Ind_{R_3}^{S\times P} V_3. \]
By Bouc's formula \ref{Bouc} and Green's indecomposable theorem, $\cO{P}$ is isomorphic to $\Ind_{R_4}^{P \times P}V_4$ for some $R_4 \leq P \times P$ and some indecomposable $\cO R_4$-module $V_4$.
Again, it is readily to see that the elements appearing as the first (second, resp.) components of elements of $R_4$ also appear as the first (second, resp.) components of some elements of $R_1$ ($R_3$, resp.).
Since as an $\cO(P\times P)$-module $\cO{P}$ has a vertex $\Delta{P}$, projections from $R_4$ to the first component and to the second component are both surjective, which implies that $\pi_2\colon R \to P$ is surjective by tracking through every step above.
Symmetric arguments show that $\pi_1\colon R \to Q$ is also surjective.
\end{proof}

Now, we give the proof of our main result as follows.

\begin{thmwot}[Proof of Theorem \ref{mainthm}]\label{proof-mainthm}
Keep the notation of \ref{setting-SEMT} and assume $\cO{H}c$ is nilpotent.
By \cite[7.6]{Puig99} (see also \cite[Theorem 9.11.2]{Linck18b}), stable equivalences of Morita type with endopermutation sources preserve fusion systems, thus it suffices to prove that the $\cO{R}$-source $V$ of the bimodule $M$ in \ref{setting-SEMT} is an endopermutation module.

By the structure of source algebras of nilpotent blocks in \cite{Puig88}, $\cO{H}c$ is Morita equivalent to $\cO{Q}$ via a bimodule with endopermutation source; see \cite[Theorem 8.11.9]{Linck18b} or \cite[Theorem 9.11.9]{Linck18b}.
Since stable equivalences of Morita type with endopermutation source are closed under compositions and taking inverses, we may therefore assume that $\cO{H}c=j\cO{H}j=\cO{Q}$.
Consequently, $jMi=Mi$, $jN_0=N_0$, $jN_1=N_1$ and $N_0$ is up to isomorphism the unique indecomposable non-projective direct summand of $Mi$ as $\cO(Q\times P)$-module.

Now (\ref{equ-N*0-N0}) becomes
\begin{equation}\label{equ-N*N-mid}
N^*_0 \otimes_{\cO{Q}} N_0 \mid \Ind_{R^\sharp}^{P\times Q} V^* \otimes_{\cO{Q}} \Ind_R^{Q\times P} V.
\end{equation}
With $G,\, b,\, P,\, i,\, M,\, Q,\, V$ replaced by $H\times G,\, c\otimes b^{\oo},\, Q\times P,\, j\otimes i^{\oo},\, M,\, R,\, V$ as in \ref{setting-SEMT}, it follows from Lemma \ref{lem-rank-iM} that
\[ \rank_\cO N_0 = |Q \times P : R|\rank_\cO V = \rank_\cO \Ind_R^{Q\times P} V. \]
Since $M$ is projective as a left module and $N_0 \mid M$, $N_0$ is projective as a left $\cO{Q}$-module and thus
\[ \rank_\cO N^*_0 \otimes_{\cO{Q}} N_0 = (\rank_\cO N_0)^2/|Q|; \]
again, since $M$ is projective as a left module and $V \mid M$ as left $\cO{R}$-modules, $\Res^R_{\iota_1(R)}V$ is projective and so is $\Res^{Q\times P}_{Q\times1}\Ind_R^{Q\times P} V \cong \Ind_{\iota_1(R)}^{Q\times1}V$ (here, we use the surjection of the projections $\pi_1\colon R \to Q$ and $\pi_2\colon R \to P$ from Lemma \ref{lem-surj-proj}), thus
\[ \rank_\cO \left( \Ind_{R^\sharp}^{P\times Q} V^* \otimes_{\cO{Q}} \Ind_R^{Q\times P} V \right) = (\rank_\cO \Ind_R^{Q\times P} V)^2/|Q|. \]
Consequently, there is an isomorphism of $\cO(P\times P)$-modules
\begin{equation}\label{equ-N*N-cong}
N^*_0 \otimes_{\cO{Q}} N_0 \cong \Ind_{R^\sharp}^{P\times Q} V^* \otimes_{\cO{Q}} \Ind_R^{Q\times P} V.
\end{equation}

Again using the surjection of the projections $\pi_1\colon R \to Q$ from Lemma \ref{lem-surj-proj}, Bouc' formula \ref{Bouc} gives that
\[ \Ind_{R^\sharp}^{P\times Q} V^* \otimes_{\cO{Q}} \Ind_R^{Q\times P} V \cong \Ind_{R^\sharp\ast R}^{P\times P} (V^* \otimes_{\cO[\iota_1(R)]} V). \]
Since $\Delta{P} \leq R^\sharp\ast R$ by the definition of $R^\sharp\ast R$ and the surjection of $\pi_2\colon R \to P$, it follows from Mackey formula that
\[ V^* \otimes_{\cO[\iota_1(R)]} V \mid \Res^{P\times P}_{\Delta{P}} \left( N^*_0 \otimes_{\cO{Q}} N_0 \right). \]
Thus it follows from (\ref{setting-SEMT}.\ref{N^*0-N0-P-P-stable-basis}) that $V^* \otimes_{\cO[\iota_1(R)]} V$ is a permutation $\cO{\Delta{P}}$-module.

To conclude, we use some arguments in the proof of \cite[Theorem 8.2]{Puig99}.
Since $\Res^R_{\iota_1(R)}V$ is projective, so is $\Res^R_{\iota_1(R)} (V^* \otimes_\cO V)$ and there is an isomorphism
\[ V^* \otimes_{\cO[\iota_1(R)]} V \cong (V^* \otimes_\cO V)^{\iota_1(R)}, \]
where, the action of $\Delta{P}$ on $V^* \otimes_{\cO[\iota_1(R)]} V$ via restriction of the action of $R^\sharp\ast R$ on $V^* \otimes_{\cO[\iota_1(R)]} V$ as in the Bouc's formula \ref{Bouc}, coincides with the action of $R/\iota_1(R)$ on $(V^*\otimes_{\cO}V)^{\iota_1(R)}$ via the isomorphism $\Delta{P} \cong P \cong R/\iota_1(R)$ (note that $\iota_1(R)$ is the kernel of the surjective map $\pi_2\colon R \to P$).
Thus we have that $(V^* \otimes_\cO V)^{\iota_1(R)}$ is a permutation $\cO(R/\iota_1(R))$-module since $V^* \otimes_{\cO[\iota_1(R)]} V$ is a permutation $\cO{\Delta{P}}$-module.
It follows from Weiss' criterion \ref{Weiss} that $V^*\otimes_{\cO}V$ is a permutation $\cO{R}$-module, \emph{i.e.} $V$ is an endompermutation $\cO{R}$-module, which completes the proof.
\end{thmwot}

\begin{rem}
We give an alternative proof of (\ref{equ-N*N-cong}) by obtaining some more precise information about $jN_0$.
Keep the notation of \ref{setting-SEMT}.

First we claim (without the assumption that $\cO{H}c$ is nilpotent) that
\begin{equation}\label{equ-Ind-mid-N0}
\Ind_R^{Q\times P} V \mid jN_0
\end{equation}
as $\cO(Q\times P)$-modules.
We use the same arguments in \cite[9.10.4]{Linck18b} or \cite[4.6]{Linck15} for our slightly more general cases.
By \cite[6.3]{Linck94} or \cite[6.4.10]{Linck18b}, $M$ has a vertex-source pair $(R_1,V_1)$ such that $R_1 \leq Q \times P$ and $V_1$ is a direct summand of $jMi$ as an $\cO{R_1}$-module.
Then it follows from \cite[5.1.6]{Linck18b} that as an $\cO(Q \times P)$-module $jMi$ has an indecomposable direct summand with vertex-source pair $(R_1,V_1)$.
Green's Indecomposable Theorem implies that $\Ind_{R_1}^{Q \times P}V_1 \mid jMi$ and thus
\[ \Ind_{R_1}^{Q \times P}V_1 \mid j\cO{H}j \otimes_{\cO{Q}} \Ind_R^{Q \times P}V \otimes_{\cO{P}} i\cO{G}i. \]
Using \cite[8.7.1]{Linck18b} and noting that both $\pi_1\colon R_1 \to Q$ and $\pi_2\colon R_1 \to P$ are surjective, we have that
\[ \Ind_{R_1}^{Q \times P}V_1 \cong \lsub{\psi}\left(\Ind_R^{Q \times P}V\right)_{\varphi}, \]
where $\psi\colon Q \to Q$ ($\varphi\colon P \to P$, resp.) belongs to the fusion system of $\cO{H}c$ ($\cO{G}b$, resp) determined by $j$ ($i$, resp.).
By \cite[8.7.4]{Linck18b}, $\lsub{\psi}j\cO{H}j \cong j\cO{H}j$ as $(\cO{Q},j\cO{H}j)$-bimodules and $i\cO{G}i_{\varphi} \cong i\cO{G}i$ as $(i\cO{G}i,\cO{P})$-bimodues.
Consequently, $\lsub{\psi}jMi_{\varphi} \cong jMi$ as $(\cO{Q},\cO{P})$-bimodules and thus
\[ \Ind_R^{Q \times P}V \mid jMi. \]
It follows form (\ref{equ-Mi-N0-N1}) that $jMi=jN_0\oplus jN_1$ with $jN_1$ projective as an $(\cO{Q},\cO{P})$-bimodule.
So (\ref{equ-Ind-mid-N0}) follows and thus
\begin{equation}\label{equ-mid-N*-N}
\Ind_{R^\sharp}^{P\times Q} V^* \otimes_{\cO{Q}} \Ind_R^{Q\times P} V \mid N^*_0j \otimes_{\cO{Q}} jN_0.
\end{equation}

Now assume that $\cO{H}c$ is nilpotent.
Then as in \ref{proof-mainthm}, we may assume $\cO{H}c=j\cO{H}j=\cO{Q}$ and thus $jN_0=N_0$.
In particular, (\ref{equ-mid-N*-N}) becomes
\[ \Ind_{R^\sharp}^{P\times Q} V^* \otimes_{\cO{Q}} \Ind_R^{Q\times P} V \mid N^*_0 \otimes_{\cO{Q}} N_0. \]
Combining the above with (\ref{equ-N*N-mid}) gives (\ref{equ-N*N-cong}).
\end{rem}


\end{document}